\documentclass[12pt,a4paper]{article}
\usepackage[T2A]{fontenc}
\usepackage[cp1251]{inputenc}
\usepackage[english]{babel}
\usepackage{amsmath,amsfonts,amssymb, amsthm}
\usepackage{cite}
\usepackage{xcolor}

\theoremstyle{plain}
\newtheorem{theorem}{Theorem}[section]
\newtheorem{lemma}{Lemma}[section]
\newtheorem{corollary}{Corollary}[section]
\theoremstyle{definition}


\begin{document}

\title{On factorized groups with permutable subgroups of factors}

\author{Victor S. Monakhov and Alexander A. Trofimuk\\
{\small Department of Mathematics and Programming Technologies,}\\
{\small Francisk Skorina Gomel State University,}\\
{\small Gomel 246019, Belarus}\\
{\small e-mail: victor.monakhov@gmail.com}\\
{\small e-mail: alexander.trofimuk@gmail.com}{\small $^{\dag}$ }
}

\date{}

\maketitle

$$
  \begin{array}{c}
    \textnormal{\small To the 75th anniversary of Mohamed Asaad}
  \end{array}
$$

\bigskip

{\bf Abstract.}
The subgroups $A$ and $B$ of a group~$G$ are called {\rm msp}-permutable, if  the following statements hold: $AB$~is a subgroup of~$G$; the subgroups $P$ and $Q$ are mutually permutable, where $P$~is an arbitrary Sylow $p$-subgroup of~$A$ and $Q$~is an arbitrary Sylow $q$-subgroup of~$B$, ${p\neq q}$. In the present paper, we investigate groups that factorized by two {\rm msp}-permutable subgroups. In particular, the supersolubility of the product of two supersoluble {\rm msp}-permutable subgroups is proved.

\medskip

{\bf Keywords.}
mutually permutable subgroups, Sylow subgroups, {\rm msp}-permutable subgroups,
supersoluble groups.

\medskip

{\bf Mathematics Subject Classification.}
20D10, 20D20.

\bigskip

Throughout this paper, all groups are finite and $G$ always denotes a
finite group. We use the standard notations and terminology
of~\cite{BalKniga}. The notation $Y\le X$  means that $Y$ is a subgroup  of a group $X$.

The subgroups $A$ and~$B$ of a group~$G$ are called  {\it mutually (totally) permutable}, if
$UB=BU$ and $AV=VA$ (respectively, $UV=VU$) for all $U\le A$ and $V\le B$.

The idea of totally and mutually permutable subgroups was first initiated by M.~Asaad and A.~Shaalan in~\cite{13}. This direction have since been subject of an in-depth study of many authors. An exhaustive report on this matter appears in~\cite[chapters~4--5]{BalKniga}.

It is quite natural to consider a factorized group $G=AB$ in which certain subgroups of the factors~$A$ and~$B$ are mutually (totally) permutable. In this direction, V.S. Monakhov~\cite{mmz13} obtained the solubility of a group $G=AB$  under the assumption that the subgroups $A$ and $B$ are soluble and the Carter subgroups (Sylow subgroups) of $A$ and of~$B$ are permutable.

We introduce the following

{\bf Definition.} The subgroups $A$ and $B$ of a group~$G$ are called {\rm msp}-permutable, if the following statements hold:

$(1)$ $AB$~is a subgroup of~$G$;

$(2)$ the subgroups $P$ and $Q$ are mutually permutable, where $P$~is an arbitrary Sylow $p$-subgroup of~$A$ and $Q$~is an arbitrary Sylow $q$-subgroup of~$B$, ${p\neq q}$.

In the present paper, we investigate groups that factorized by two {\rm msp}-permutable subgroups. In particular, the supersolubility of the product of two supersoluble {\rm msp}-permutable subgroups is proved.

\section{Preliminaries}\label{pre}

In this section, we give some definitions and basic results which
are essential in the sequel.
A group whose chief factors have prime orders is called {\it supersoluble}.
Recall that a {\it $p$-closed} group is a group with a normal Sylow $p$-subgroup and a {\it $p$-nilpotent} group is a group with a normal Hall $p^{\prime}$-subgroup.

Denote by $G^\prime $, $Z(G)$, $F(G)$ and $\Phi (G)$  the derived subgroup, centre, Fitting and Frattini  subgroups of~$G$, respectively;  $\mathbb P$~the set of all primes.
We use~$E_{p^t}$  to denote an elementary abelian group of order~$p^t$ and $Z_m$ to denote a cyclic group of order~$m$. The semidirect product of a normal subgroup~$A$ and a subgroup~$B$ is written as follows: $A\rtimes B$.

The monographs~\cite{Bal_Clas}, \cite{Doerk} contain the necessary information of the theory of formations. The formations of all nilpotent,  $p$-groups and supersoluble
groups  are denoted by $\mathfrak N$, $\mathfrak N_p$ and $\mathfrak U$, respectively.
A formation $\frak F$ is said to be {\it saturated} if
$G/\Phi (G)\in \frak F$ implies  $G\in \frak F$.
A {\it formation function} is a function $f$ defined on $\mathbb{P}$ such that $f(p)$ is a, possibly empty, formation. A formation $\frak F$ is said to be {\it local} if there exists a formation function $f$ such that $\frak F=\{G \mid G/F_p(G)\in  f(p)\}$. Here $F_p(G)$~is the greatest normal $p$-nilpotent subgroup of~$G$. We write  $\frak F = LF(f)$ and $f$ is a local definition of $\frak F$.
By  \cite[Theorem~IV.3.7]{Doerk}, among all possible local definitions of a local formation~$\frak F$ there exists a unique $f$ such that $f$ is integrated (i.e., $f(p)\subseteq \frak F$ for all  $p\in \mathbb{P}$) and full (i.e., $f(p)=\frak N_pf(p)$ for all  $p\in \mathbb{P}$). Such local definition  $f$ is said to be {\it canonical local definition} of $\frak F$. By \cite[Theorem~IV.4.6]{Doerk}, a formation is saturated if and only if it is local.

A subgroup $H$ of a group $G$ is called $\mathbb P$-{\it subnormal} in~$G$, see~\cite{VVTSMJ10}, if either~${H=G}$, or there is a chain subgroups~$$
H=H_0\le H_1\le \ldots \le H_n=G, \ |H_{i}:H_{i-1}|\in \Bbb P, \
\forall i.
$$

A group $G$ is called {\it $\mathrm{w}$-supersoluble} (widely supersoluble), if every Sylow subgroup of $G$ is $\mathbb P$-subnormal in $G$.
Denote by  $\mathrm{w}\frak U$ the class of all $\mathrm{w}$-supersoluble groups, see~\cite{VVTSMJ10}. In~\cite[Theorem~2.7, Proposition~2.8]{VVTSMJ10}
proved that $\mbox {w}\mathfrak U$ is a subgroup-closed saturated formation
and every group from $\mbox {w}\mathfrak U$ has an ordered Sylow tower of
supersoluble type. By~ \cite[Theorem~B]{Mon_Kn_Res_2013}, \cite[Theorem~2.6]{Mon2016SMJ}, \cite[Theorem~2.13]{VVTSMJ10},
 $G\in \mbox {w}\mathfrak U$ if and only if~$G$ has an ordered Sylow
tower of supersoluble type and every metanilpotent (biprimary)
subgroup of~$G$ is supersoluble.

Denote by  $\mathrm{v}\frak U$ the class of groups all of whose primary cyclic subgroups
are $\mathbb P$-subnormal.
In~\cite[Theorem~B]{Mon_Kn_Res_2013} proved that $\mbox {v}\mathfrak U$ is a subgroup-closed saturated formation and $G\in \mbox {v}\mathfrak U$ if and only if~$G$
has an ordered Sylow tower of supersoluble type  and every biprimary subgroup of $G$ with a cyclic Sylow subgroup is supersoluble.
It is easy to verify that $\frak U\subseteq \mbox {w}\mathfrak U\subseteq \mbox {v}\mathfrak U\subseteq \mathcal{D}$. Here $\mathcal{D}$~is the formation of all groups which have an ordered Sylow tower of supersoluble type.

If $H$ is a subgroup of $G$, then $H_G=\bigcap _{x\in G} H^x$ is called {\it the core} of $H$ in $G$.
If a group $G$ contains a maximal subgroup $M$ with trivial core, then $G$ is said to be {\it primitive} and $M$ is its {\it primitivator}.  A simple check proves the following lemma.

\begin{lemma} \label{l3}
Let $\mathfrak{F}$ be a saturated formation and $G$ be a group. Assume that $G \notin\mathfrak{F}$, but $G/N \in \mathfrak{F}$ for all non-trivial normal subgroups $N$ of $G$. Then $G$ is a primitive group.
\end{lemma}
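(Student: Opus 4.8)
The plan is to establish the two structural features that force primitivity: $G$ should have a \emph{unique} minimal normal subgroup, and $\Phi(G)$ should be trivial; once both are in hand, a maximal subgroup avoiding that minimal normal subgroup will serve as a primitivator.

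First I would show $G$ has a unique minimal normal subgroup. Since $G\notin\mathfrak F$ and the trivial group lies in $\mathfrak F$, we have $G\neq 1$, so minimal normal subgroups exist. If $N_1\neq N_2$ were two of them, then $N_1\cap N_2=1$, and since a formation is closed under subdirect products, $G/N_1\in\mathfrak F$ and $G/N_2\in\mathfrak F$ would give $G\cong G/(N_1\cap N_2)\in\mathfrak F$, contradicting the hypothesis. Hence there is a unique minimal normal subgroup $N$ of $G$, and every non-trivial normal subgroup of $G$ contains $N$. Next, using that $\mathfrak F$ is saturated, I would note that $\Phi(G)=1$: otherwise $\Phi(G)$ is a non-trivial normal subgroup, so by hypothesis $G/\Phi(G)\in\mathfrak F$, and saturation forces $G\in\mathfrak F$, again a contradiction.

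Finally, since $N\not\le\Phi(G)$, there is a maximal subgroup $M$ of $G$ with $N\not\le M$. Its core $M_G$ is a normal subgroup of $G$ contained in $M$; if $M_G\neq 1$ then $N\le M_G\le M$ by the uniqueness established above, a contradiction. Therefore $M_G=1$, so $M$ is a maximal subgroup with trivial core and $G$ is primitive with primitivator $M$.

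The argument is entirely routine; the only places to be careful are invoking the right closure property of formations (closure under subdirect products) in the uniqueness step and the exact definition of saturation in the Frattini step. I do not anticipate a genuine obstacle.
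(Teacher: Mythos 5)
Your proof is correct and is precisely the routine argument the paper leaves unwritten (it only remarks that ``a simple check proves the following lemma''): uniqueness of the minimal normal subgroup via closure of formations under subdirect products, $\Phi(G)=1$ via saturation, and a maximal subgroup avoiding that minimal normal subgroup necessarily being core-free. No gaps; the only implicit assumption is that $\mathfrak{F}$ is nonempty (so $1\in\mathfrak{F}$), which holds in the paper's applications since $\mathfrak{U}\subseteq\mathfrak{F}$.
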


\begin{lemma} \label{l4} \emph{(\cite[Theorem~15.6]{Doerk})}
Let  $G$ be a soluble primitive group and  $M$ is a primitivator of $G$. Then the following statements hold:

$(1)$ $\Phi (G)=1$;

$(2)$ $F(G)=C_G(F(G))=O_p(G)$ and  $F(G)$  is an elementary abelian subgroup of order~$p^n$ for some prime $p$ and some positive integer~$n$;

$(3)$ $G$ contains a unique minimal normal subgroup $N$ and moreover, $N=F(G)$;

$(4)$ $G=F(G)\rtimes M$ and $O_p(M)=1$.
\end{lemma}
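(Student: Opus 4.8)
The plan is to establish this classical structure theorem for soluble primitive groups directly, by analysing the action of $G$ on a minimal normal subgroup; everything will follow from $M$ being maximal with $M_G = 1$ together with solubility. For statement $(1)$: since $\Phi(G)$ lies in every maximal subgroup, in particular $\Phi(G) \le M$, and $\Phi(G)$ is normal, so $\Phi(G) \le M_G = 1$.

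Next I would fix a minimal normal subgroup $N$ of $G$, which is an elementary abelian $p$-group for some prime $p$ because $G$ is soluble. From $M_G = 1$ we get $N \nleq M$, so maximality of $M$ gives $G = NM$; as $N$ is abelian, $N \cap M$ is normalised by both $N$ and $M$, hence is normal in $G$, and being contained in $M$ it is trivial, so $G = N \rtimes M$. Dedekind's modular law gives $C_G(N) = N \times (C_G(N) \cap M)$, and $C_G(N) \cap M$ is normalised by $M$ (because $C_G(N) \trianglelefteq G$ and $M$ normalises itself) and centralised by $N$, hence is normal in $G$ and contained in $M$, so it is trivial and $C_G(N) = N$. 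Uniqueness of $N$ is then immediate: a second minimal normal subgroup $L$ would satisfy $L \cap N = 1$, forcing $L \le C_G(N) = N$, a contradiction. For $(3)$ and the Fitting part of $(2)$, note $Z(F(G))$ is a nontrivial normal subgroup of $G$, hence contains the unique minimal normal subgroup $N$; then $F(G) \le C_G(Z(F(G))) \le C_G(N) = N \le F(G)$, so $F(G) = N = C_G(N)$ is elementary abelian of order $p^n$, and being a normal $p$-subgroup it coincides with $O_p(G)$. Together with $G = N \rtimes M$ this yields all of $(2)$--$(4)$ except the assertion $O_p(M) = 1$.

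The remaining claim $O_p(M) = 1$ is the only step that needs a genuine idea. I would regard $N$ as an $\mathbb{F}_p M$-module by conjugation; it is faithful since $C_M(N) = C_G(N) \cap M = N \cap M = 1$. If $O_p(M) \neq 1$, then a nontrivial $p$-group acting on a nonzero $\mathbb{F}_p$-module has nonzero fixed points, so $C_N(O_p(M)) \neq 1$; because $O_p(M) \trianglelefteq M$, this fixed-point subgroup is $M$-invariant, and it is also $N$-invariant as $N$ is abelian, hence it is a nontrivial normal subgroup of $G$. Minimality of $N$ then forces $C_N(O_p(M)) = N$, i.e. $O_p(M)$ centralises $N$, so $O_p(M) \le C_M(N) = 1$, a contradiction. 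The main obstacle, such as it is, is simply to arrange the pieces so that uniqueness of the minimal normal subgroup and the identity $C_G(F(G)) = F(G)$ are in hand before they are used; no difficult computation is required.
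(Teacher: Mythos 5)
Your argument is correct and complete. Note, however, that the paper does not prove this lemma at all: it is quoted verbatim from Doerk--Hawkes (Theorem~15.6 in the cited edition), so there is no in-paper proof to compare against. What you have written is essentially the standard textbook derivation: $\Phi(G)\le M_G=1$; a minimal normal subgroup $N$ is elementary abelian by solubility, complements $M$ because $N\cap M$ is normalised by the abelian $N$ and by $M$; the same trick applied to $C_G(N)\cap M$ gives $C_G(N)=N$, whence uniqueness of $N$ (via $[L,N]\le L\cap N=1$, which you should state explicitly) and $F(G)=N=O_p(G)$ through $N\le Z(F(G))$; finally $O_p(M)=1$ follows from the fixed-point argument for a $p$-group acting faithfully on the $\mathbb{F}_pM$-module $N$, since $C_N(O_p(M))$ would be a nontrivial proper-or-full normal subgroup of $G$ inside $N$, forcing $O_p(M)\le C_M(N)=1$. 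This is the same route taken in the cited reference, so your contribution is a self-contained proof of a result the authors simply import; no gaps beyond the one small justification noted above.
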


\begin{lemma} \emph{(\cite[Lemma~2.16]{Skiba_2007})} \label{l_skiba}
Let $\frak F$ be a saturated formation containing~$\frak U$ and $G$ be a group with a normal subgroup~$E$ such that~ $G/E\in \frak F$. If $E$ is cyclic, then $G\in  \frak F$.
\end{lemma}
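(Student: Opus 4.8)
The plan is to argue by contradiction. Suppose the lemma fails and choose a pair $(G,E)$, with $E$ a cyclic normal subgroup of $G$, $G/E\in\mathfrak{F}$ and $G\notin\mathfrak{F}$, for which $|G|$ is minimal. Then $E\neq1$, since otherwise $G=G/E\in\mathfrak{F}$. First I would show that $G/N\in\mathfrak{F}$ for every nontrivial normal subgroup $N$ of $G$: the image $EN/N\cong E/(E\cap N)$ is cyclic and normal in $G/N$, and $(G/N)/(EN/N)\cong G/EN$ is an epimorphic image of $G/E\in\mathfrak{F}$, hence lies in $\mathfrak{F}$; thus the pair $(G/N,EN/N)$ satisfies the hypotheses of the lemma and has $|G/N|<|G|$, so by minimality $G/N\in\mathfrak{F}$. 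Since $\mathfrak{F}$ is saturated, Lemma~\ref{l3} applies and $G$ is primitive; fix a primitivator $M$.

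Next I would bring in a normal subgroup of prime order. Let $p$ be a prime dividing $|E|$ and let $Z$ be the unique subgroup of order $p$ of the cyclic group $E$; as $Z$ is characteristic in $E$ it is normal in $G$, and $Z\neq1$. Since $Z$ is normal and the core $M_G$ is trivial, $Z\not\le M$, so $G=ZM$ by maximality of $M$; moreover $Z\cap M$ is normalized by $M$ and, being contained in the abelian group $Z$, also by $Z$, hence $Z\cap M\trianglelefteq ZM=G$ and therefore $Z\cap M=1$. Thus $G=Z\rtimes M$ with $M\cong G/Z$. The same device controls the centralizer: $C_M(Z)=M\cap C_G(Z)$ is normal in $M$ and is centralized by $Z$, so $C_M(Z)\trianglelefteq ZM=G$ and hence $C_M(Z)\le M_G=1$; and since $G=ZM$ with $Z\cap M=1$ forces $C_G(Z)=Z\,C_M(Z)$, we conclude $C_G(Z)=Z$.

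Finally I would derive the contradiction. From $C_G(Z)=Z$ we get $G/Z=G/C_G(Z)\hookrightarrow\operatorname{Aut}(Z)\cong Z_{p-1}$, which is cyclic, so $G/Z$ is cyclic. In a chief series of $G$ running through $Z$, the factor below $Z$ has order $p$ and the factors above $Z$ are chief factors of the cyclic group $G/Z$, hence of prime order; so every chief factor of $G$ has prime order, that is, $G$ is supersoluble. Then $G\in\mathfrak{U}\subseteq\mathfrak{F}$, contradicting $G\notin\mathfrak{F}$.

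The main obstacle I foresee is organizational rather than conceptual: checking that the hypotheses are genuinely inherited by all proper quotients so that Lemma~\ref{l3} can be invoked, and carrying out cleanly the two computations with the core-free maximal subgroup $M$ that give $G=Z\rtimes M$ and $C_G(Z)=Z$; after those, the conclusion is automatic. Should the primitivity reduction prove awkward, an alternative is to reduce by induction on $|E|$ directly to the case $|E|=p$ and then work inside $\mathfrak{F}=LF(f)$ with $f$ its canonical local definition: since $\mathfrak{U}\subseteq\mathfrak{F}$ one has $u(p)\subseteq f(p)$ for the canonical local definition $u$ of $\mathfrak{U}$, and because $G/C_G(E)\hookrightarrow\operatorname{Aut}(Z_p)$ is abelian of exponent dividing $p-1$, one can verify the chief-factor membership conditions for $LF(f)$ along a chief series through $E$.
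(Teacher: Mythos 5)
Your proposal is correct, and there is nothing in the paper to compare it against line by line: the paper does not prove this lemma at all, it simply imports it from Skiba's article (Lemma~2.16 of \cite{Skiba_2007}). Your argument is a valid self-contained proof. The reduction to a primitive group via the minimal counterexample and Lemma~\ref{l3} is sound; the choice of the characteristic subgroup $Z\le E$ of prime order $p$, the computations $Z\cap M=1$, $C_M(Z)\trianglelefteq ZM=G$ hence $C_M(Z)\le M_G=1$, and $C_G(Z)=Z(C_G(Z)\cap M)=Z$ by Dedekind are all correct, and then $G/Z=G/C_G(Z)\hookrightarrow \operatorname{Aut}(Z)\cong Z_{p-1}$ forces $G$ to be supersoluble, so $G\in\mathfrak U\subseteq\mathfrak F$ contradicts the choice of $G$ --- note that at this last stage you do not even need $G/E\in\mathfrak F$ any more, only saturation of $\mathfrak F$ and $\mathfrak U\subseteq\mathfrak F$, which is fine. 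The alternative you sketch (induction to $|E|=p$ and then working with the canonical local definition $f$, using $g(p)\subseteq f(p)$ for the canonical local definition $g$ of $\mathfrak U$ and $G/C_G(E)\hookrightarrow\operatorname{Aut}(Z_p)$) is essentially the standard proof of the cited result, and it is also exactly the mechanism this paper uses in the proof of Theorem~\ref{th2}; your primitive-group route buys a shorter, more elementary contradiction at the cost of proving slightly more (that the putative minimal counterexample is metacyclic of order dividing $p(p-1)$), while the local-definition route generalizes more readily to non-cyclic $E$ with prescribed chief-factor structure.
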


\begin{lemma} \label{ll_4_1_21}
Let $\frak F$~be a formation, $G$~group, $A$ and $B$~subgroups of~$G$ such that $A$ and $B$ belong to $\frak F$. If $[A,B]=1$, then $AB\in \frak F$.
\end{lemma}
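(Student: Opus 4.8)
The plan is to realize $AB$ as a homomorphic image of the external direct product $A\times B$. Since $[A,B]=1$, every element of $A$ commutes with every element of $B$, so the multiplication map $\varphi\colon A\times B\to G$ given by $\varphi(a,b)=ab$ is a group homomorphism: one checks $\varphi\big((a_1,b_1)(a_2,b_2)\big)=a_1a_2b_1b_2=a_1b_1a_2b_2=\varphi(a_1,b_1)\varphi(a_2,b_2)$, where the middle equality uses $b_1a_2=a_2b_1$. Its image is precisely $AB$, which incidentally re-proves that $AB$ is a subgroup.

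Next I would invoke the closure properties built into the definition of a formation, namely closure under homomorphic images and under (finite) subdirect products. Closure under subdirect products gives closure under finite direct products as a special case: applying the defining property to $A\times B$ with the two coordinate kernels $1\times B$ and $A\times 1$, whose intersection is trivial and whose corresponding quotients are $A$ and $B$, shows $A\times B\in\mathfrak F$ whenever $A,B\in\mathfrak F$. Hence $A\times B\in\mathfrak F$, and since $AB\cong(A\times B)/\ker\varphi$ is a homomorphic image of a group in $\mathfrak F$, it follows that $AB\in\mathfrak F$.

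There is essentially no obstacle here; the lemma is a formal consequence of the axioms for a formation, and the "simple check" the authors refer to is exactly this. The only point that deserves an explicit line is the verification that $\varphi$ is a homomorphism, which is where the hypothesis $[A,B]=1$ is used. I would also remark that saturation of $\mathfrak F$ is not assumed and not needed: one cannot replace the argument by passing to $AB/(A\cap B)$, since that would only place the quotient, not $AB$ itself, in $\mathfrak F$; working with the direct product $A\times B$ is what makes the proof go through for an arbitrary formation.
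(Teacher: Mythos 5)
Your proposal is correct and follows essentially the same route as the paper: both realize $AB$ as the image of the multiplication homomorphism $A\times B\to G$, use $[A,B]=1$ to verify the homomorphism property, and conclude via closure of $\mathfrak F$ under direct products and homomorphic images. Your explicit justification of $A\times B\in\mathfrak F$ via subdirect-product closure is a slightly more careful spelling-out of a step the paper simply asserts, but it is the same argument.
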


\begin{proof}
Since
$$
[A,B]=\langle [a,b] \mid a\in A, \ b\in B\rangle =1,
$$
it follows that $ab=ba$ for all $a\in A$, $b \in B$.
Let
$$
A\times B=\{(a,b)\mid a\in A,\ b\in B\},
$$
$$
(a_1,b_1)(a_2,b_2)=(a_1a_2,b_1b_2), \ \
\forall a_1, a_2\in A,\ b_1, b_2 \in B~
$$
be the external direct product of groups $A$ and $B$.
Since $A\in \frak F$, $B\in \frak F$ and $\frak F$~is a formation,
we have $A\times B\in \frak F.$
Let $\varphi: A\times B\rightarrow AB$~be a function with $\varphi((a, b))=ab$. It is clear that  $\varphi$~is a surjection.
Because
$$
\varphi((a_1,b_1)(a_2,b_2))=\varphi((a_1a_2,b_1b_2))=a_1a_2b_1b_2=
$$
$$
=a_1b_1a_2b_2=\varphi((a_1,b_1))\varphi((a_2,b_2),
$$
it follows that  $\varphi$~is an epimorphism. The core $\mathrm{Ker}~\varphi$ contains  all elements $(a,b)$ such that~$ab=1$. In this case
$
a=b^{-1}\in A\cap B\le Z(G).
$
By the Fundamental Homomorphism Theorem,
$$
A\times B/\mathrm{Ker}~\varphi\cong AB.
$$
Since  $A\times B\in \frak F$ and $\frak F$~is a formation,
$A\times B/\mathrm{Ker}~\varphi \in \frak F$.
Hence $AB\in \frak F$.
\end{proof}

\begin{lemma} \label{l13} \emph{(\cite{Chun})}
Let a group $G = HK$~be the product of subgroups $H$ and $K$. If  $L$ is normal in~$H$ and $L\leq K$, then
$L\leq K_G$.
\end{lemma}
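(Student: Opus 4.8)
The plan is to verify directly that $L$ is contained in every $G$-conjugate of $K$, or equivalently that every $G$-conjugate of $L$ lies inside $K$. Since $K_G=\bigcap_{x\in G}K^x$ is, by definition, the largest normal subgroup of $G$ contained in $K$, it will be enough to exhibit a normal subgroup of $G$ that contains $L$ and is contained in $K$; the natural candidate is the normal closure $L^{G}=\langle L^{g}\mid g\in G\rangle$.

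The heart of the argument is a short conjugation computation exploiting the factorization $G=HK$. Given an arbitrary $g\in G$, I would write $g=hk$ with $h\in H$ and $k\in K$. Then
$$
L^{g}=L^{hk}=(L^{h})^{k}=L^{k},
$$
because $L$ being normal in $H$ forces $L^{h}=L$; and $L^{k}\le K^{k}=K$ since $L\le K$ and $k\in K$. Hence $L^{g}\le K$ for every $g\in G$, so $L^{G}=\langle L^{g}\mid g\in G\rangle\le K$ (it is generated by subgroups of $K$). As $L^{G}$ is normal in $G$ and contained in $K$, I would conclude $L\le L^{G}\le K_G$, as required.

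I do not anticipate any genuine obstacle here: the whole proof collapses to the one-line identity $L^{hk}=L^{k}$, and the only thing to watch is the bookkeeping — using that $K_G=\bigcap_{x\in G}K^x$, so that "$L\le K_G$" is indeed equivalent to "$L^{g}\le K$ for all $g\in G$", and keeping the conjugation on the correct side when passing between the two formulations.
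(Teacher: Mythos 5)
Your proof is correct: writing an arbitrary $g\in G=HK$ as $g=hk$ and using $L^{hk}=(L^{h})^{k}=L^{k}\le K$ shows $L^{G}\le K$, and since $L^{G}$ is normal in $G$ this gives $L\le L^{G}\le K_G$. The paper itself gives no proof of this lemma (it is cited from Chunihin), and your argument is exactly the standard one-line verification one would expect.
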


\begin{lemma} \label{l11'}
Let $G=P\rtimes M$~be a primitive soluble group, where $M$~is a primitivator of~$G$ and $P$~is a Sylow $p$-subgroup of~$G$.
Let~$A$ and $B$~be subgroups of~$M$ and $M=AB$.
If $B\leq N_G(X)$ for every subgroup~$X$ of~$P$, then the
following statements hold:

$(1)$ $B$~is a cyclic group of order dividing~$p-1$;

$(2)$ $[A,B]=1$.
\end{lemma}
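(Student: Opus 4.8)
The plan is to translate the hypothesis into a statement about a linear action and then invoke an elementary fact from linear algebra. By Lemma~\ref{l4}, $P=F(G)=C_G(F(G))=O_p(G)$ is an elementary abelian group of order $p^n$ and is the unique minimal normal subgroup of $G$, while $O_p(M)=1$. Since $G=P\rtimes M$, we have $M\cap P=1$, hence $C_M(P)=M\cap C_G(P)=M\cap P=1$; thus $M$ --- and in particular the subgroups $A$ and $B$ --- act faithfully on $P$ by conjugation. Regarding $P$ as a vector space $V$ of dimension $n$ over the field $\mathbb{F}_p$, this gives an embedding of $M$ into $\mathrm{Aut}(P)\cong GL(V)$ under which a subgroup $X\le P$ corresponds to an $\mathbb{F}_p$-subspace and the relation $B\le N_G(X)$ becomes: every element of (the image of) $B$ stabilizes the subspace corresponding to $X$.

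First I would prove $(1)$. The hypothesis says that each $b\in B$ stabilizes every subspace of $V$, in particular every one-dimensional subspace. For a fixed such $b$ and each nonzero $v\in V$ we then have $bv=\lambda_v v$ for some $\lambda_v\in\mathbb{F}_p^{\ast}$; if $v,w$ are linearly independent, comparing $b(v+w)=\lambda_{v+w}(v+w)$ with $b(v+w)=\lambda_v v+\lambda_w w$ forces $\lambda_v=\lambda_w$, so all the scalars $\lambda_v$ coincide and $b$ acts on $V$ as a single scalar $\lambda_b\in\mathbb{F}_p^{\ast}$ (this is immediate when $n=1$). The map $b\mapsto\lambda_b$ is a homomorphism from $B$ into the cyclic group $\mathbb{F}_p^{\ast}$ of order $p-1$, and it is injective because $B$ acts faithfully on $P$. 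Hence $B$ is isomorphic to a subgroup of $\mathbb{F}_p^{\ast}$, i.e. $B$ is cyclic of order dividing $p-1$, which is~$(1)$.

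For $(2)$ I would use that the scalar transformations $\lambda\cdot\mathrm{id}_V$ lie in the centre of $GL(V)$. By the previous paragraph every element of $B$ acts on $P$ as a scalar, hence commutes, as an automorphism of $P$, with every element of $M$, in particular with every element of $A$. Since the conjugation action of $M$ on $P$ is faithful, it follows that $ab=ba$ for all $a\in A$ and $b\in B$, that is, $[A,B]=1$. (The hypothesis $M=AB$ is not actually needed for either conclusion; we keep it only because it is the form in which the lemma will be applied.)

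I do not expect a serious obstacle here: once Lemma~\ref{l4} is used to produce the faithful linear representation of $M$ on $P$, both parts reduce to the standard observation that a group of linear transformations fixing every line consists of scalar matrices and that scalar matrices are central. The only point requiring a little care is the verification that $C_M(P)=1$, so that the representation is faithful, which is where the equality $C_G(F(G))=F(G)=P$ from Lemma~\ref{l4} is essential.
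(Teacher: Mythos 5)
Your proposal is correct and is essentially the paper's own argument in linear-algebra dress: the paper shows that each $b\in B$ acts on $P$ as a universal power map $x\mapsto x^{n_b}$, which is exactly your statement that $b$ acts as a scalar on the $\mathbb{F}_p$-space $P$, then gets $(1)$ by embedding $B$ into the automorphism group of a group of order $p$ using $C_G(P)=P$ and $M\cap P=1$, and gets $(2)$ by the element-wise version of your ``scalars are central'' computation, showing $[b^{-1},a^{-1}]\in C_G(P)\cap M=1$. There is no gap; your explicit appeal to Lemma~\ref{l4} to obtain faithfulness, $C_M(P)=M\cap C_G(P)=M\cap P=1$, merely makes precise what the paper uses tacitly, and your observation that $M=AB$ is not needed is likewise consistent with the paper's proof, which never uses it.
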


\begin{proof}
We fix an element $b\in B$. If $x\in P$,
then~$x^b\in \langle x\rangle $, since $B\leq N_G(\langle x\rangle)$
by hypothesis. Hence $x^b=x^{m_x}$, where $m_x$~is a positive integer and
$1\leq m_x\leq p$. If $y\in P\setminus \{x\}$, then
$$
(xy)^b=(xy)^{m_{xy}}=x^{m_{xy}}y^{m_{xy}},\
(xy)^b=x^by^b=x^{m_{x}}y^{m_{y}},\
$$
$$
x^{m_{xy}}y^{m_{xy}}=x^{m_{x}}y^{m_{y}}, \
x^{m_{xy}-m_x}=y^{m_{y}-m_{xy}}=1,  \ m_{xy}=m_x=m_y.
$$
Therefore we can assume that $x^b=x^{n_b}$ for all~$x\in P$, where
${1\leq n_b\leq p}$ and $n_b$~is a positive integer.

Assume that there exist~$d\in B$ and $y\in P\setminus \{1\}$
such that~$y^d=y$. Then $n_d=1$ and $x^d=x$ for all~$x\in P$,
i.e. $d\in C_G(P)=P$ and $d=1$.
Consequently~$B$ is a group automorphism of a group of order~$p$.
Hence $B$~is cyclic of order dividing~$p-1$.

Show that $[A,B]=1$. We fix an element
$[b^{-1},a^{-1}]\in [A,B]$.
Since $P$ is normal in~$G$, it follows that $x^a\in P$ for any~$a\in A$ and any~$x\in P$. Hence
$$
x^{[b^{-1},a^{-1}]}=x^{bab^{-1}a^{-1}}=(x^b)^{ab^{-1}a^{-1}}=
$$
$$
=((x^{n_b})^a)^{b^{-1}a^{-1}}=((x^a)^{n_b})^{b^{-1}a^{-1}}=
((x^a)^{b})^{b^{-1}a^{-1}}=(x)^{abb^{-1}a^{-1}}=x.
$$
Therefore $[b^{-1},a^{-1}]\in C_G(P)=P$. Since
$[A,B]\leq M$, we have $[b^{-1},a^{-1}]\in M\cap P=1$
and~$[A,B]=1$.
\end{proof}

\section{Properties of {\rm msp}-permutable subgroups}

We will say that a group $G$ satisfies the property:

$E_{\pi}$ if $G$ has at least one Hall $\pi$-subgroup;

$C_{\pi}$ if $G$ satisfies $E_{\pi}$ and any two Hall $\pi$-subgroups of $G$ are conjugate in~$G$;

$D_{\pi}$ if $G$ satisfies $C_{\pi}$ and every $\pi$-subgroup of $G$ is contained in some Hall
$\pi$-subgroup of $G$.

Such a group is also called an $E_{\pi}$-group, $C_{\pi}$-group, and $D_{\pi}$-group, respectively.

\begin{lemma} \label{l10}
Let $A$ and $B$~be {\rm msp}-permutable subgroups of~$G$ and~$G=AB$.

$(1)$ If $N$~is a normal subgroup of~$G$, then
$G/N=(AN/N)(BN/N)$~is the {\rm msp}-permutable product of subgroups~$AN/N$ and~$BN/N$.

$(2)$ If $A\le H\le G$, then $H$~is the {\rm msp}-permutable product of subgroups~$A$ and~$H\cap B$.

$(3)$ If $G\in D_{\pi}$, then there exist Hall $\pi$-subgroups~$G_\pi$,
$A_\pi$ and~$B_\pi$ of~$G$, of~$A$ and, of~$B$, respectively, such that $G_{\pi}=A_{\pi}B_{\pi}$~is the
{\rm msp}-permutable product of subgroups~$A_{\pi}$ and~$B_{\pi}$.
\end{lemma}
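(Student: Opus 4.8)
The three parts are quotient-stability, intersection-stability, and Hall-subgroup transfer of the msp-permutability property, so in each case I need to verify condition (1) (the product is a subgroup) and condition (2) (mutual permutability of Sylow subgroups of the factors, for distinct primes). For (1), I would recall that products of subgroups behave well under quotients and under the intersection $H\cap B$ when $A\le H$: indeed $AN/N\cdot BN/N=ABN/N=G/N$, and $A(H\cap B)=AB\cap H=H$ by Dedekind's law since $A\le H$. For the $D_\pi$ case, $G\in D_\pi$ together with $G=AB$ gives, by the standard theory of Hall subgroups in $D_\pi$-groups (e.g. as in~\cite{BalKniga}), Hall $\pi$-subgroups $A_\pi\le A$ and $B_\pi\le B$ and $G_\pi\le G$ with $G_\pi=A_\pi B_\pi$; here one uses that a Hall $\pi$-subgroup of $AB$ can be chosen as a product of a Hall $\pi$-subgroup of $A$ with one of $B$, conjugating as needed.

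For condition (2) in part (1): let $\bar p\ne \bar q$ and let $\bar P$ be a Sylow $\bar p$-subgroup of $AN/N$, $\bar Q$ a Sylow $\bar q$-subgroup of $BN/N$. Lifting, $\bar P=PN/N$ for some Sylow $p$-subgroup $P$ of $A$ and $\bar Q=QN/N$ for some Sylow $q$-subgroup $Q$ of $B$ (Sylow subgroups of a quotient are images of Sylow subgroups). Since $P$ and $Q$ are mutually permutable in $G$ by hypothesis, their images $PN/N$ and $QN/N$ are mutually permutable in $G/N$: this is the standard fact that the class of mutually permutable pairs is closed under taking images, which holds because every subgroup of $PN/N$ has the form $UN/N$ with $U\le P$ (here $P$ normalizes... actually $U\le PN$, but one reduces to $U\le P$ up to the $N$-part), and $UN/N\cdot QN/N=UQN/N=QUN/N$. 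For part (2), condition (2) is immediate once I know that the Sylow subgroups of $A$ and of $H\cap B$ are, respectively, Sylow subgroups of $A$ (trivially) and Sylow subgroups of $B$ intersected down — a Sylow $q$-subgroup of $H\cap B$ is contained in a Sylow $q$-subgroup of $B$, so I must check mutual permutability of a Sylow $p$-subgroup of $A$ with a \emph{non-full} $q$-subgroup of $B$; but mutual permutability of $P$ with $Q$ passes to any subgroup of $Q$ on the $B$-side only if $A$-side stays full, which it does, and on the other side $P\le A\le H$ so $PV=VP$ for $V\le Q$ follows from $P$ permuting with all of $B$'s relevant subgroups — more precisely, mutual permutability of $P,Q$ gives $PV=VP$ for all $V\le Q$ and $UQ=QU$ for all $U\le P$, and restricting $V$ to a Sylow $q$-subgroup of $H\cap B$ keeps these identities.

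For part (3), condition (2): given distinct primes $p,q\in\pi$, a Sylow $p$-subgroup of $A_\pi$ is a Sylow $p$-subgroup of $A$ (since $A_\pi$ is a Hall $\pi$-subgroup of $A$ and $p\in\pi$), and likewise a Sylow $q$-subgroup of $B_\pi$ is a Sylow $q$-subgroup of $B$; so the required mutual permutability is exactly the hypothesis on $A,B$ in $G$, and mutual permutability of two subgroups contained in $G_\pi$ is inherited by $G_\pi$ since it only involves subgroups of $P$ and $Q$, all of which lie in $G_\pi$.

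The main obstacle I anticipate is part (1): verifying that mutual permutability of $P$ and $Q$ in $G$ descends to mutual permutability of $PN/N$ and $QN/N$ in $G/N$ requires care, because a subgroup of $PN/N$ need not be the image of a subgroup of $P$ (it is the image of a subgroup of $PN$), so the naive argument "$UN/N$ with $U\le P$" is incomplete; one should instead invoke the general lemma that mutual permutability is a quotient-closed property of subgroup pairs (available in~\cite[Chapter 4]{BalKniga}), or argue directly that if $W/N\le PN/N$ then $W=N(W\cap P_0)$ for a suitable Sylow $p$-subgroup and push the permutation relation through. The other delicate point is matching up Sylow subgroups after passing to the quotient in part (1) and after intersecting in part (2): one must ensure the Sylow subgroups that witness msp-permutability in the smaller/quotient group can be chosen compatibly with those in $G$, which uses conjugacy of Sylow subgroups and the fact that msp-permutability is required for \emph{arbitrary} choices of Sylow subgroups, so conjugacy causes no loss.
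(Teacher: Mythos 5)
Parts (1) and (3) of your proposal are essentially sound and match the paper: for (1) you correctly flag that a subgroup of $PN/N$ is only the image of a subgroup of $PN$, and the fix is either to cite quotient-closure of mutual permutability or to use the modular law (for $N\le W\le PN$ one has $W=N(W\cap P)$ with the \emph{same} $P$; no "suitable Sylow $p$-subgroup" is needed), which is at least as careful as the paper's one-line assertion; for (3) your argument (Sylow $p$-subgroups of $A_\pi$ are Sylow $p$-subgroups of $A$ for $p\in\pi$, plus the factorization $G_\pi=A_\pi B_\pi$ coming from the Hall-subgroup theory of factorized $D_\pi$-groups) is exactly the paper's.

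The genuine gap is in part (2). Write $A_q$ for a Sylow $q$-subgroup of $A$ and $R$ for a Sylow $r$-subgroup of $H\cap B$, $r\neq q$. Mutual permutability of the pair $(A_q,R)$ requires two families of identities: $A_qV=VA_q$ for all $V\le R$, and $UR=RU$ for all $U\le A_q$. Your argument delivers only the first: choosing a Sylow $r$-subgroup $B_r$ of $B$ with $R\le B_r$, every subgroup of $R$ is a subgroup of $B_r$ and hence permutes with $A_q$ by hypothesis. For the second family you invoke the principle that mutual permutability of $(A_q,B_r)$ "passes to any subgroup of $Q$ on the $B$-side as long as the $A$-side stays full"; this principle is false in general --- it asserts precisely that every subgroup of $A_q$ permutes with the subgroup $R$ of $B_r$, i.e.\ that the mutually permutable pair $(A_q,B_r)$ is totally permutable, whereas the hypothesis gives only $UB_r=B_rU$, not $UR=RU$. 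What rescues the statement is the specific structure you did not use: since $H\cap B_r$ is an $r$-subgroup of $H\cap B$ containing the Sylow $r$-subgroup $R$, one has $R=H\cap B_r$; then for any $U\le A_q\le A\le H$, Dedekind's identity gives $H\cap UB_r=U(H\cap B_r)=UR$, which is a subgroup because $UB_r$ is, whence $UR=RU$. This Dedekind step is exactly how the paper closes part (2), and without it (or an equivalent argument) that part of the lemma remains unproved.
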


\begin{proof}
1. Let $p\in \pi(AN/N)$, $X/N$~be a Sylow $p$-subgroup of~$AN/N$
and $P$~be a Sylow $p$-subgroup of~$A$. Then $PN/N=X/N$. Similarly,
if $q\in \pi(BN/N)$ such that $q\neq p$, $Y/N$~is a Sylow $q$-subgroup of~$BN/N$ and $Q$~is a Sylow $q$-subgroup of~$B$. Then $QN/N=Y/N$.
By hypothesis, $P$ and $Q$ are mutually permutable. Hence $X/N$ and $Y/N$ are mutually permutable.

2. By Dedekind's identity, $H=A(H\cap B)$.
Let $A_q$~be a Sylow $q$-subgroup of~$A$,
$R$~be a Sylow  $r$-subgroup of~$H\cap B$, where $q\neq r$, and~$B_r$~be a Sylow $r$-subgroup of~$B$ containing~$R$.
Since $(H\cap B_r)$~is a Sylow $r$-subgroup of~$H\cap B$ and~$R\le H\cap B_r$, it follows that~$R=H\cap B_r$.

Because $A_q$ and $B_r$ are mutually permutable,
we have $A_qU\leq G$ for every subgroup~$U$ of~$R$.

Let $V$~be an arbitrary subgroup of~$A_q$.
Since  $A_q$ and $B_r$ are mutually permutable,
$$
VB_r\le G, \ \ H\cap VB_r=V(H\cap B_r)=VR\le G.
$$
Hence  $A_q$ and $R$ are mutually permutable.

3. By~\cite[Theorem~1.1.19]{BalKniga}, there are Hall $\pi$-subgroups~$G_\pi$, $A_\pi$ and~$B_\pi$ of~$G$, of~$A$, and of~$B$, respectively, such that $G_{\pi}=A_{\pi}B_{\pi}$.
Since $A$ and $B$~are {\rm msp}-permutable, it follows that obviously, $A_{\pi}$ and $B_{\pi}$~are {\rm msp}-permutable.
\end{proof}


\begin{lemma} \label{l11}
Let $A$ and $B$~be {\rm msp}-permutable subgroups of~$G$ and~$G=AB$.
Let~$p,r\in \pi (G)$, $p$~be the greatest prime in~$\pi (G)$ and $r$~be the smallest prime in~$\pi (G)$.
Then the following statements hold:

$(1)$ if $A$ and $B$ are $p$-closed, then~$G$ is $p$-closed;

$(2)$ if $A$ and $B$ are $r$-nilpotent, then~$G$ is $r$-nilpotent;

$(3)$ if  $A$ and $B$ have an ordered Sylow tower of supersoluble type, then~$G$ has an ordered Sylow tower of supersoluble type.
\end{lemma}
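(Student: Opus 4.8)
The plan is to prove the three assertions together by induction on $|G|$, and to note at the outset that $(3)$ is a formal consequence of $(1)$. Put $p_1=\max\pi(G)$. Each of $A,B$ is $p_1$-closed: a factor whose order is divisible by $p_1$ has $p_1$ as its own largest prime, so an ordered Sylow tower of supersoluble type forces its Sylow $p_1$-subgroup to be normal, while a factor of order prime to $p_1$ is vacuously $p_1$-closed. By $(1)$, $G$ has a normal Sylow $p_1$-subgroup $P$. By Lemma~\ref{l10}$(1)$, $G/P=(AP/P)(BP/P)$ is an {\rm msp}-permutable product; its factors $AP/P\cong A/(A\cap P)$ and $BP/P\cong B/(B\cap P)$ still possess an ordered Sylow tower of supersoluble type (this class is a formation, hence quotient-closed), and $p_1\notin\pi(G/P)$. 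By induction $G/P$ has such a tower, and prepending $P$ gives one for $G$. So it remains to establish $(1)$ and $(2)$; I treat them in parallel, writing $\mathfrak F$ for whichever of the two classes — $p$-closed groups, resp.\ $r$-nilpotent groups, both of which are saturated formations — is in play, and $q$ for the corresponding extremal prime.

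Let $G$ be a counterexample of least order. By Lemma~\ref{l10}$(1)$, for every $1\ne N\trianglelefteq G$ the quotient $G/N=(AN/N)(BN/N)$ is an {\rm msp}-permutable product of homomorphic images of $A$ and $B$, hence of members of $\mathfrak F$; moreover $\pi(G/N)\subseteq\pi(G)$, so $q$ is again extremal in $\pi(G/N)$ whenever $q\mid|G/N|$, and otherwise $G/N\in\mathfrak F$ trivially. Minimality gives $G/N\in\mathfrak F$ for all $1\ne N\trianglelefteq G$, so Lemma~\ref{l3} makes $G$ primitive. After reducing to the case that $G$ is soluble (treating a non-abelian chief factor separately), Lemma~\ref{l4} applies: $G=F(G)\rtimes M$ with $M$ a primitivator and $F(G)=O_s(G)=C_G(F(G))$ elementary abelian of order $s^n$. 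Now pin down $s$. In case $(1)$, $s=p$ is impossible, for it would give $O_p(G)=F(G)\ne 1$ while $G/F(G)\cong M\in\mathfrak F$, whence the full Sylow $p$-subgroup of $G$ would be normal, contrary to $G\notin\mathfrak F$; so $s<p$, a Sylow $p$-subgroup $P$ of $G$ lies in $M$, and $P\trianglelefteq M$ because $M\cong G/F(G)$ is $p$-closed. In case $(2)$, $s=r$ is forced, for if $s\ne r$ then, taking a normal Hall $r'$-subgroup $K$ of the $r$-nilpotent group $M\cong G/F(G)$, the subgroup $F(G)K$ (a subgroup since $F(G)\trianglelefteq G$ and $K\trianglelefteq M$) is normal in $G$, of order $|F(G)|\,|K|$ prime to $r$ and of index $|G|_r$, i.e.\ a normal Hall $r'$-subgroup of $G$ — contrary to $G\notin\mathfrak F$; a further reduction then leaves us with $F(G)=O_r(G)$ a normal Sylow $r$-subgroup of $G$.

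The crux is to apply Lemma~\ref{l11'} to $G=F(G)\rtimes M$. For this one must exhibit a factor — say $\overline B=BF(G)/F(G)$, after possibly interchanging $A$ and $B$ — that normalizes every subgroup of $F(G)$, an elementary abelian $s$-group with $s$ strictly smaller than $p$ in case $(1)$ and equal to $r$ in case $(2)$. This is the one place where the {\rm msp}-hypothesis, rather than mere permutability of $A$ and $B$, is used: since $s$ is not the largest prime, the Sylow $p$-subgroup of one factor is mutually permutable with the Sylow $s$-subgroups of the other (hence with the automorphisms they induce on $F(G)$), and the extremality of $q$ constrains the permutation action on $F(G)$ enough to yield exactly the hypothesis of Lemma~\ref{l11'}. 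Granting this, Lemma~\ref{l11'} gives $\overline B$ cyclic of order dividing $s-1$ and $[\overline A,\overline B]=1$, where $\overline A=AF(G)/F(G)$. In case $(2)$, $s=r=\min\pi(G)$ forces $\overline B=1$, so $B\le F(G)=O_r(G)$; then $G=AB$ with $B$ inside the self-centralizing normal $r$-subgroup $O_r(G)$ contradicts the non-$r$-nilpotency of $G$ (here Lemma~\ref{l13} is convenient). In case $(1)$, $\overline B$ and the faithful action of $M=\overline A\,\overline B$ on $F(G)$ force $M$ to act by power automorphisms, so $M$ is cyclic of order dividing $s-1<p$; then $p\nmid|G|$, contradicting $p\in\pi(G)$. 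For $(3)$, as already noted, nothing beyond $(1)$ is needed.

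The step I expect to be the main obstacle is establishing the hypothesis of Lemma~\ref{l11'} — that after the reduction one of the factors normalizes every subgroup of the minimal normal subgroup $F(G)$ — starting only from the cross-Sylow mutual permutability. It will require careful bookkeeping of which Sylow subgroups of $A$ and of $B$ meet $F(G)$, of how $A\cap B$ sits inside them, and of the strict position of $s$ relative to the extremal prime; the reduction to soluble $G$ inside a non-abelian chief factor is a secondary technical point to be checked along the way.
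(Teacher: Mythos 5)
Your treatment of part $(3)$ coincides with the paper's (reduce modulo the normal Sylow $p$-subgroup supplied by $(1)$ and induct via Lemma~\ref{l10}\,(1)), but your plan for $(1)$ and $(2)$ is a different route and it has genuine gaps. First, the reduction to a \emph{soluble} primitive group, which you need in order to invoke Lemma~\ref{l4} and later Lemma~\ref{l11'}, is not available. In case $(1)$ a $p$-closed group for the largest prime $p$ need not be soluble (e.g.\ $Z_7\times A_5$ is $7$-closed), so the factors themselves may be non-soluble; Theorem~\ref{l12} cannot help since it assumes soluble factors (and, in the paper's logical order, comes after this lemma). Your parenthetical ``treating a non-abelian chief factor separately'' is exactly the part that would need a proof, and none is indicated. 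Second, and more seriously, the crux of your argument --- that after the reduction one of the factors normalizes \emph{every} subgroup of $F(G)$, so that Lemma~\ref{l11'} applies --- is explicitly left unestablished; that is precisely the content one has to prove, and it is not clear it can be extracted from msp-permutability in this generality. Note also that Lemma~\ref{l11'} requires the minimal normal subgroup to be a full Sylow subgroup of $G$ and a factorization of the primitivator $M$ by subgroups of $M$: in your case $(1)$ the socle is an $s$-group with $s<p$ and need not be Sylow, and in case $(2)$ the ``further reduction'' to $F(G)$ being a Sylow $r$-subgroup is asserted without justification. So the proposal, as written, is a programme with its two hardest steps missing rather than a proof.

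For comparison, the paper avoids the primitive-group machinery entirely here. For $(1)$ it factorizes a Sylow $p$-subgroup as $P=P_1P_2$ with $P_i$ the (normal) Sylow $p$-subgroups of the factors, takes a subgroup chain $1=Q_0<Q_1<\dots<Q_t=Q$ with indices $q$ inside each Sylow $q$-subgroup $Q$ of a Hall $p'$-subgroup of the opposite factor, and uses msp-permutability plus the ``subgroup of index equal to the smallest prime divisor is normal'' argument to climb the chain and conclude that the Hall $p'$-subgroups of both factors normalize $P_1$ and $P_2$, whence $P\trianglelefteq G$ --- a completely elementary argument with no solubility assumption. For $(2)$ the same climbing device produces an $r$-nilpotent Hall $\{r,q\}$-subgroup of $G$ for every $q\in\pi(G)\setminus\{r\}$, and the conclusion (solubility and $r$-nilpotency of $G$) is then drawn from the Tyutyanov--Kniahina theorem on biprimary Hall subgroups. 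If you want to salvage your approach, you would have to first prove solubility of $G$ in cases $(1)$--$(2)$ independently and then actually verify the normalizer hypothesis of Lemma~\ref{l11'}; the paper's chain-climbing argument is the standard way to get such normalization statements from msp-permutability, and once you have it the detour through primitivity is unnecessary.
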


\begin{proof}
1. By~\cite[Theorem~1.1.19]{BalKniga}, there are Sylow
$p$-subgroups $P$, $P_1$ and~$P_2$ of $G$, of~$A$, and of~$B$, respectively, such that $P=P_1P_2$. By hypothesis, $P_1$ is normal in~$A$ and $P_2$ is normal in~$B$.
Let $H_1$ and $H_2$~be Hall $p^\prime$-subgroups of~$A$ and of~$B$, respectively, and~$Q$~ be a Sylow $q$-subgroup of~$H_1$, where $q\in \pi (H_1)$. Choose a chain of subgroups
$$
1=Q_0<Q_1<\ldots <Q_{t-1}<Q_t=Q, \ \ |Q_{i+1}:Q_i|=q.
$$
Since $A$ and $B$~are  {\rm msp}-permutable,
we have $P_2Q_i$~is a subgroup of $G$ for every~$i$. Since $|P_2Q_1:P_2|=q$
and~$p>q$, it follows that~$P_2$ is normal in~$P_2Q_1$. Then by induction, we have that~$P_2$ is normal in~$P_2Q$. Because $q$~is an arbitrary prime in~$\pi (H_1)$, it follows that~$P_2$ is normal in~$P_2H_1$ and~$\langle H_1,H_2\rangle \le N_G(P_2)$.
Similarly,~$\langle H_1,H_2\rangle \le N_G(P_1)$. Hence $P=P_1P_2$ is normal in~$G$.

2. Let $R$, $R_1$ and~$R_2$~are Sylow $r$-subgroups of~$G$, of~$A$, and of~$B$, respectively, such that $R=R_1R_2$. Let $K_1$ and $K_2$~be Hall $r^\prime$-subgroups of~$A$ and of~$B$. Let $q\in \pi (G)\setminus \{r\}$, $Q$,
$Q_1$ and~$Q_2$~be Sylow $q$-subgroups of~$G$, of~$A$, and of~$B$, respectively, such that  $Q=Q_1Q_2$. Choose a chain of subgroups
$$
1=V_0<V_1<\ldots <V_{t-1}<V_t=R_1, \ \ |V_{i+1}:V_i|=r.
$$
Since $A$ and $B$~are {\rm msp}-permutable,
$V_iQ_2$~is a subgroup of $G$ for every~$i$. Since $|V_1Q_2:Q_2|=r$
and~$q>r$, it follows that~$Q_2$ is normal in~$V_1Q_2$. Then by induction, we have that~$R_1\le N_G(Q_2)$. By hypothesis,~$A$ is  $r$-nilpotent,
hence~$R_1\le N_G(Q_1)$ and~$R_1\le N_G(Q)$. Similarly,~$R_2\le N_G(Q)$ and~$G$ has a $r$-nilpotent Hall $\{r,q\}$-subgroup~$RQ$. Since $q$~is an arbitrary prime in~$\pi (G)\setminus \{r\}$, it follows that~$G$ is soluble and~$r$-nilpotent by~\cite[Corollary]{TuytKnJAlgebra}.

3. By~(1), we have that a Sylow $p$-subgroup~$P$ is normal in~$G$ for the greatest~$p\in \pi (G)$. By Lemma~\ref{l10}\,(1), $G/P$ is the product of {\rm msp}-permutable subgroups $AN/N$ and $BN/N$. By induction,  $G/N$ has an ordered Sylow tower of supersoluble type, hence~$G$ has an ordered Sylow tower of supersoluble type.
\end{proof}


\begin{theorem} \label{l12}
Let $A$ and $B$~be {\rm msp}-permutable subgroups of~$G$
and~$G=AB$. If $A$ and $B$ are soluble,  then $G$ is soluble.
\end{theorem}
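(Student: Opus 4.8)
The plan is to argue by contradiction, taking $G$ to be a counterexample of least order; then $A$ and $B$ are proper and non-trivial (otherwise $G$ equals $A$ or $B$ and is soluble). By Lemma~\ref{l10}\,(1), every quotient of $G$ by a non-trivial normal subgroup is again an {\rm msp}-permutable product, now of soluble groups, so by minimality $G/N$ is soluble for every $1\neq N\trianglelefteq G$. Hence $G$ has no two distinct minimal normal subgroups (else $G$ embeds into the soluble group $G/N_1\times G/N_2$), so it has a unique minimal normal subgroup $N$; and $N$ is not soluble, since otherwise $N$ and $G/N$ soluble would force $G$ soluble. As the class of soluble groups is a saturated formation, Lemma~\ref{l3} shows $G$ is primitive; and since $N$ is a non-abelian minimal normal subgroup, $\Phi(G)=1$, $C_G(N)=1$, and $N=\mathrm{Soc}(G)$ is a direct product of isomorphic non-abelian simple groups.

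Next I would show $AN=BN=G$. If $AN\neq G$, then by Lemma~\ref{l10}\,(2) the proper subgroup $AN=A(AN\cap B)$ is an {\rm msp}-permutable product of the soluble groups $A$ and $AN\cap B$, hence soluble by minimality; but $N\leq AN$ is not soluble. So $AN=G$ and, symmetrically, $BN=G$. In particular $A\cap N\trianglelefteq A$, $B\cap N\trianglelefteq B$, and $A/(A\cap N)\cong B/(B\cap N)\cong G/N$; note also that if a prime $p$ divides $|N|$ but not $|G/N|$, then every Sylow $p$-subgroup of $A$, and of $B$, lies in $N$.

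The remaining step --- deriving a contradiction from the mutual permutability of the Sylow subgroups of the factors --- is the decisive one. For a suitably chosen prime $p\in\pi(N)$, I would fix, via \cite[Theorem~1.1.19]{BalKniga}, Sylow $p$-subgroups $A_p$ of $A$ and $B_p$ of $B$ with $A_pB_p$ a Sylow $p$-subgroup of $G$, and exploit that $A_p$ is mutually permutable with every Sylow $q$-subgroup of $B$ for $q\neq p$, and symmetrically; building chains inside the biprimary subgroups thus obtained, as in the proof of Lemma~\ref{l11}, yields strong normalizing relations for $A_p$ and $B_p$. Combined with $AN=BN=G$ and with $N$ being a direct product of non-abelian simple groups, the goal is to force $A\cap N$ or $B\cap N$ to contain a full Sylow $p$-subgroup of $N$ for every $p$ dividing $|N|$, hence $N\leq A$ or $N\leq B$, contradicting solubility of $A$ and $B$. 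I expect this to be the main obstacle: Lemma~\ref{l11} controls Sylow subgroups only at the extreme primes and only for $p$-closed or $r$-nilpotent factors, whereas here $A$ and $B$ are merely soluble, so the Sylow permutability must be transferred into the non-abelian socle $N$ by hand; in particular the case where one of the factors is a prime-power group, so that the relevant prime never enters a cross-pair $P,Q$ with $p\neq q$, will require separate treatment.
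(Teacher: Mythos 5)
Your opening reductions are correct (quotients over non-trivial normal subgroups remain msp-permutable products by Lemma~\ref{l10}\,(1), a minimal counterexample has a unique, non-soluble minimal normal subgroup, and $AN=BN=G$ via Lemma~\ref{l10}\,(2)), but the proof stops exactly where the real work begins: no mechanism is given for the ``decisive step'', and the route you sketch is unlikely to close. The chain arguments of Lemma~\ref{l11} force normality only because the relevant index is the smallest (or the complement of the largest) prime; inside a non-abelian socle no such prime ordering is available, so mutual permutability of cross-prime Sylow subgroups does not by itself yield the normalizing relations you hope for, and, as you yourself note, a prime-power factor never enters a cross-pair at all. So this is a genuine gap, not a routine verification.

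The paper's actual argument never touches the socle. After reducing to the case where $G$ has no non-trivial \emph{soluble} normal subgroup, it takes $U=O_s(A)\neq 1$ (possible since $A$ is soluble), rules out $B$ being an $s$-group (else $G=AG_s$ and Lemma~\ref{l13} would force $U^G\le (G_s)_G$, a soluble normal subgroup), and picks a Sylow $q$-subgroup $Q$ of $B$ with $q\neq s$. Since $U$ is normal in $A$ and lies in every Sylow $s$-subgroup of $A$, msp-permutability gives $UQ^x=Q^xU$ for every $x=ba\in G$. The Lennox--Stonehewer theorem \cite[Theorem~7.2.5]{LS} then makes $D=U^Q\cap Q^U$ subnormal in $G$; as $D\le UQ$ is biprimary, hence soluble, $D=1$, so $[U,Q]\le [U^Q,Q^U]=1$ and therefore $Q^B\le H:=N_G(U)$. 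By Dedekind and Lemma~\ref{l10}\,(2), $H=A(H\cap B)$ is again an msp-permutable product of soluble subgroups, hence soluble by induction; since $G=HB$ with $Q^B$ normal in $B$ and contained in $H$, Lemma~\ref{l13} gives $Q^B\le H_G$, a soluble normal subgroup of $G$, so $Q^B=1$ --- a contradiction. This combination (a non-trivial $O_s(A)$, permutability with all $G$-conjugates of $Q$, subnormality of $U^Q\cap Q^U$, and Chunihin's core lemma applied to $N_G(U)$) is the idea your proposal is missing; without it, or a genuine substitute, the argument does not go through.
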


\begin{proof}
We use induction on the order of $G$ and the method of proof from~\cite[Theorem~2]{mmz13}.
Let $N\ne 1$~be a soluble normal subgroup of~$G$.
By Lemma~\ref{l10}\,(1), $G/P$ is the product of soluble {\rm msp}-permutable subgroups
$AN/N$ and $BN/N$. By induction, $G/N$ is soluble, hence~$G$ is soluble.
In what follows, we assume that $G$ contains no non-trivial soluble normal subgroups.

Since~$A$ is soluble, $U=O_s(A)\ne 1$ for some~$s\in \pi (A)$.
If $B$~is an $s$-subgroup of~$G$, then $G=AG_s$, $U\le G_s$
and~$U^G\le (G_s)_G$ by Lemma~\ref{l13}, a contradiction.
Hence~$B$ is not $s$-subgroup of $G$ and let $Q$~be an arbitrary Sylow $q$-subgroup of~$B$, where $q\in \pi (B)\setminus \{s\}$.
Since $A$ and $B$ are {\rm msp}-permutable,
$$
UQ^x=UQ^{ba}=U^a(Q^b)^a=(UQ^b)^a=(Q^bU)^a=Q^xU
$$
for every $x=ba\in G$, where $b\in B$ and $a\in A$.
By~\cite[Theorem 7.2.5]{LS}, $D=U^Q\cap Q^U$ is subnormal in~$G$. Since $U^Q\leq UQ$ and
$UQ$ is soluble, it follows that $D$~is a soluble subnormal subgroup of~$G$ and $D=1$. Hence
$$
[U,Q]\leq [U^Q, Q^U]\leq D=1.
$$
This is true for any Sylow $q$-subgroup of~$B$, therefore~$[U,Q^B]=1$.

Let $H=N_G(U)$. By Dedekind's identity, $H=A(H\cap B)$. By Lemma~\ref{l10}\,(2),
$H$ is the product of soluble {\rm msp}-permutable subgroups $A$ and $H\cap B$. By induction, $H$ is soluble. Since $[U,Q^B]=1$, we have $Q^B\le N_G(U)=H$. Because $G=AB=HB$, $Q^B$ is normal in~$B$ and $Q^B\leq H$, it follows that $Q^B\leq H_G=1$ by Lemma~\ref{l13}, a contradiction.
\end{proof}

\begin{lemma}  \label{l7}
Let $G = G_1G_2$ be the product of {\rm msp}-permutable subgroups $G_1$ and $G_2$.
If a Sylow $p$-subgroup $P$ of~$G$ is normal in~$G$ and abelian, then $P\cap G_i$ is normal in~$G$ for every~$i\in \{1,2\}$.
\end{lemma}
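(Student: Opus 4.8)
The plan is to first identify $P_i := P\cap G_i$ as a normal Sylow $p$-subgroup of $G_i$, and then to use the normality of $P$ in $G$ together with the permutability of $P_1$ with the Sylow subgroups of $G_2$ to force $P_1^{G_2}=P_1$.

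First I would record that, since $P$ is a normal Sylow $p$-subgroup of $G$, the quotient $G/P$ is a $p'$-group, hence so is $G_i/(P\cap G_i)\cong G_iP/P$; as $P\cap G_i$ is itself a $p$-group, it is a Sylow $p$-subgroup of $G_i$, and it is normal in $G_i$ because $P\trianglelefteq G$. (An order count also shows $P=(P\cap G_1)(P\cap G_2)$, but this will not be needed.) In particular $G_1\le N_G(P_1)$, and symmetrically $G_2\le N_G(P_2)$.

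Next, fix a prime $q\in\pi(G_2)$ with $q\ne p$ together with a Sylow $q$-subgroup $Q$ of $G_2$. Since $P_1$ is a Sylow $p$-subgroup of $G_1$ and $G_1,G_2$ are {\rm msp}-permutable, $P_1$ and $Q$ are mutually permutable; in particular $P_1Q=QP_1$ is a subgroup of $G$ of order $|P_1|\,|Q|$, so $P_1$ is a Sylow $p$-subgroup of $P_1Q$. For every $x\in Q$ we have $x^{-1}P_1x\le x^{-1}(P_1Q)x=P_1Q$, and therefore the normal closure $P_1^{Q}$ is contained in $P_1Q$; since $P\trianglelefteq G$ we also have $P_1^{Q}\le P$. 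Hence $P_1^{Q}\le P\cap P_1Q$, which is a $p$-subgroup of $P_1Q$ containing the Sylow $p$-subgroup $P_1$, so $P\cap P_1Q=P_1$ and consequently $P_1^{Q}=P_1$, i.e. $Q\le N_G(P_1)$. For the prime $p$ itself, the Sylow $p$-subgroup $P_2=P\cap G_2$ of $G_2$ normalizes $P_1$ because $P_1,P_2\le P$ and $P$ is abelian. Since $G_2$ is generated by its Sylow subgroups, it follows that $G_2\le N_G(P_1)$; combining with $G_1\le N_G(P_1)$ gives $G=G_1G_2\le N_G(P_1)$, that is, $P_1\trianglelefteq G$. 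Interchanging the roles of $G_1$ and $G_2$ yields $P_2\trianglelefteq G$ as well.

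I do not expect a serious obstacle here. The only point requiring a little care is the ``trapping'' step: one must notice that $P_1^{Q}$ lies simultaneously inside the normal $p$-subgroup $P$ and inside the subgroup $P_1Q$ furnished by permutability, which forces it down to $P_1$. Everything else is routine Sylow bookkeeping, and abelianness of $P$ is used only to deal with conjugation by the Sylow $p$-subgroup of $G_2$.
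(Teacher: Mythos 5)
Your proof is correct and follows essentially the same route as the paper's: identify $P\cap G_i$ as the normal Sylow $p$-subgroup of $G_i$, use {\rm msp}-permutability to form $P_1Q$ and the normality of $P$ (your ``trapping'' $P_1^{Q}\le P\cap P_1Q=P_1$ is exactly the paper's observation that every subgroup of $G$ is $p$-closed), and invoke the abelianness of $P$ only to handle the $p$-part of the other factor. The only cosmetic difference is that you generate $G_2$ by its Sylow subgroups where the paper writes $G_j=(P\cap G_j)(G_j)_{p'}$ with a Hall $p'$-subgroup.
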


\begin{proof}
Assume that $i,j\in \{1,2\}$ and $i\neq j$. It is clear that $P\cap G_i$ is a Sylow $p$-subgroup of~$G_i$
and $P\cap G_i=(G_i)_p$ is normal in~$G_i$.
Hence~$G_i$ has a Hall $p^\prime$-subgroup~$(G_i)_{p^{\prime}}$. Since $G_i$ and $G_j$ are {\rm msp}-permutable, it follows that $(G_i)_p(G_j)_{p^{\prime}}$~is a subgroup of~$G$
and~$(G_j)_{p^{\prime}}\leq N_G((G_i)_p)$, because every subgroup of~$G$ is
$p$-closed. By hypothesis,~$P$ is abelian, therefore $(G_i)_p$ is normal in~$P$ and
$$
G_j=(G_j)_p(G_j)_{p^{\prime}}=(P\cap G_j)(G_j)_{p^{\prime}}\le N_G((G_i)_p).
$$
Hence $(G_i)_p$ is normal in~$G=G_iG_j=G_1G_2$ for every~$i\in \{1,2\}$.
\end{proof}

\section{Proof of the main theorem}

\begin{theorem} \label{th2} Let $\frak F$~be a subgroup-closed saturated formation such that
$\frak U\subseteq \frak F\subseteq \mathcal{D}$.
Let $G = G_1G_2$ be the product of {\rm msp}-permutable subgroups~$G_1$ and $G_2$. If $G_1, G_2\in \frak F$, then $G\in \frak F$.
\end{theorem}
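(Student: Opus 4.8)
The plan is to argue by contradiction, taking $G$ of minimal order among the counterexamples. Since $G_1,G_2\in\mathfrak F\subseteq\mathcal D$, both factors have an ordered Sylow tower of supersoluble type, hence by Lemma~\ref{l11}\,(3) so does $G$; in particular $G$ is soluble. For every non-trivial normal subgroup $N$ of $G$, Lemma~\ref{l10}\,(1) shows that $G/N=(G_1N/N)(G_2N/N)$ is an msp-permutable product with $G_iN/N\cong G_i/(G_i\cap N)\in\mathfrak F$, so minimality of $G$ gives $G/N\in\mathfrak F$. As $G\notin\mathfrak F$ and $\mathfrak F$ is saturated, Lemma~\ref{l3} shows $G$ is primitive, and Lemma~\ref{l4} then applies: $\Phi(G)=1$, the subgroup $P:=F(G)=C_G(F(G))=O_p(G)$ is the unique minimal normal subgroup of $G$, it is elementary abelian of order $p^n$, and $G=P\rtimes M$ for a primitivator $M$. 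Because $G$ has a Sylow tower of supersoluble type, the Sylow subgroup for the largest prime $r\in\pi(G)$ is non-trivial and normal, hence contains $P$, forcing $r=p$; thus $P$ is the normal Sylow $p$-subgroup of $G$ and $p=\max\pi(G)$.

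Next I would extract the content of msp-permutability. By Lemma~\ref{l7} each $P\cap G_i$ is normal in $G$, hence lies in $\{1,P\}$ by minimality of $P$; since $P\cap G_1$ and $P\cap G_2$ are Sylow $p$-subgroups of $G_1$ and $G_2$ whose product is the Sylow $p$-subgroup $P$ of $G$ by~\cite[Theorem~1.1.19]{BalKniga}, after renaming we may assume $P\le G_1$. The claim would then be that $G_2\le N_G(U)$ for every $U\le P$. Indeed, for $q\in\pi(G_2)\setminus\{p\}$ and $Q\in\mathrm{Syl}_q(G_2)$ the subgroups $P\in\mathrm{Syl}_p(G_1)$ and $Q$ are mutually permutable, so $UQ$ is a subgroup of $G$; being a subgroup of $G\in\mathcal D$, it has a Sylow tower of supersoluble type whose top prime is $p$, so its Sylow $p$-subgroup $U$ is normal in it and $Q\le N_G(U)$; moreover the Sylow $p$-subgroup $P\cap G_2$ of $G_2$ lies in the abelian group $P$ and so normalizes $U$ as well, and since $G_2$ is generated by its Sylow subgroups the claim follows. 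In particular any Hall $p'$-subgroup $H_2$ of $G_2$ normalizes every subgroup of $P$ and acts faithfully on $P$ (as $C_G(P)=P$), hence acts by scalars, so $P\rtimes H_2$ is supersoluble.

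Then I would choose, by~\cite[Theorem~1.1.19]{BalKniga}, Hall $p'$-subgroups $H_1\le G_1$ and $H_2\le G_2$ with $M:=H_1H_2$ a Hall $p'$-subgroup of $G$; as $P$ is a normal Hall $p$-subgroup all complements to $P$ are conjugate, so $M$ is a primitivator and $G=P\rtimes M$. Since $H_2\le G_2\le N_G(X)$ for all $X\le P$, Lemma~\ref{l11'} (with $A=H_1$, $B=H_2$) gives that $H_2$ is cyclic of order dividing $p-1$ and $[H_1,H_2]=1$; hence $H_1\trianglelefteq M$ and $G_1=PH_1\trianglelefteq G$. Let $f$ be the canonical local definition of $\mathfrak F$. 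Because $C_G(P)=P$, neither $G$ nor $G_1$ has a non-trivial normal $p'$-subgroup, so $F_p(G_1)=P=F_p(G)$; therefore $H_1\cong G_1/F_p(G_1)\in f(p)$ (as $G_1\in\mathfrak F$), and likewise $H_2\cong(P\rtimes H_2)/F_p(P\rtimes H_2)\in f(p)$ (as $P\rtimes H_2\in\mathfrak U\subseteq\mathfrak F$). Since $[H_1,H_2]=1$ and $f(p)$ is a formation, Lemma~\ref{ll_4_1_21} yields $M=H_1H_2\in f(p)$, i.e. $G/F_p(G)=G/P\cong M\in f(p)$. For any prime $q\neq p$, since $P$ is a normal $q'$-subgroup one has $F_q(G)/P=F_q(G/P)$, whence $G/F_q(G)\cong(G/P)/F_q(G/P)\cong M/F_q(M)\in f(q)$ because $M\cong G/P\in\mathfrak F$. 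Thus $G\in LF(f)=\mathfrak F$, a contradiction.

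The step I expect to be the main obstacle is this last one. Once the structure $G=P\rtimes M$ with $M=H_1H_2$, $[H_1,H_2]=1$ and $H_2$ cyclic of order dividing $p-1$ is available, the conclusion $G\in\mathfrak F$ does not follow from $M\in\mathfrak F$ alone; one has to descend to the canonical local definition $f$, use that $P$ is the Fitting subgroup of both $G_1$ and $G$ (which rests on $C_G(F(G))=F(G)$) in order to place $H_1$ and $H_2$, and hence $M$, inside $f(p)$ via Lemma~\ref{ll_4_1_21}, and then verify the local condition at every prime. By contrast, solubility, primitivity, and the normalizing property of $G_2$ — where the Sylow-tower hypothesis together with msp-permutability do the work — are comparatively routine given the lemmas already established.
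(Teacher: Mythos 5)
Your proof is correct, and its skeleton is the same as the paper's: reduce to a primitive minimal counterexample via Lemmas~\ref{l11}\,(3), \ref{l10}\,(1), \ref{l3}, \ref{l4}, identify $P=F(G)=O_p(G)$ as the normal Sylow subgroup for the largest prime, use Lemma~\ref{l7} to place $P$ inside a factor, apply Lemma~\ref{l11'} to a factorized Hall $p'$-subgroup, and finish through the canonical local definition $f$ together with Lemma~\ref{ll_4_1_21}. Where you genuinely deviate is in the handling of the Sylow $p$-part of the second factor: the paper splits into the case where $p$ divides both $|G_1|$ and $|G_2|$ (there it shows a subgroup of order $p$ is normal, forces $|P|=p$, and invokes Lemma~\ref{l_skiba}) and the case where $G_2$ is a $p'$-group (there it applies Lemma~\ref{l11'} with $B=G_2$ and places $G_2$ in $f(p)$ via the canonical local definition $g$ of $\mathfrak U$ and \cite[Proposition~IV.3.11]{Doerk}). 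You avoid the case split altogether by proving $G_2\le N_G(U)$ for every $U\le P$ directly — the Sylow $q$-subgroups of $G_2$ with $q\neq p$ normalize $U$ because $UQ\le G$ and $U=UQ\cap P\trianglelefteq UQ$, while the Sylow $p$-subgroups of $G_2$ lie in the abelian $P$ — and then apply Lemma~\ref{l11'} with $B=H_2$ a Hall $p'$-subgroup of $G_2$; you also replace the appeal to $g(p)\subseteq f(p)$ by the observation that $P\rtimes H_2$ is supersoluble, hence $H_2\cong (P\rtimes H_2)/F_p(P\rtimes H_2)\in f(p)$, and you close by verifying the local condition at every prime (using $F_q(G)/P=F_q(G/P)$ for $q\neq p$ and $G/P\in\mathfrak F$ by minimality) instead of using fullness $f(p)=\mathfrak N_pf(p)$ as the paper does. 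The net effect is a slightly more uniform argument that dispenses with Lemma~\ref{l_skiba} and with the explicit comparison of canonical local definitions, at the cost of the small extra verifications ($F_p(G_1)=F_p(P\rtimes H_2)=F_p(G)=P$ from $C_G(P)=P$, and the quotient behaviour of $F_q$), all of which you carry out correctly.
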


\begin{proof}
By Lemma~\ref{l11}\,(3), $G$ has an ordered Sylow tower of supersoluble type.
Let $P$ be a Sylow $p$-subgroup of~$G$, where $p$~is the greatest prime in~$\pi(G)$. Then $P$ is normal in~$G$.

Assume that $G\not \in \frak F$. Let $N$~be a non-trivial normal subgroup of~$G$. Hence
$$
G/N=(G_1N/N)(G_2N/N),
$$
$$ G_1N/N\cong G_1/G_1\cap N\in \frak F,\
G_2N/N\cong G_2/G_2\cap N\in \frak F.
$$
By Lemma~\ref{l10}\,(1), $G_1N/N$ and $G_2N/N$ are {\rm msp}-permutable.
Consequently,  $G/N$ satisfies the hypothesis of the theorem, and by induction,  $G/N\in \frak F$. Since $\frak F$ is saturated, $G$ is primitive by Lemma~\ref{l3}. Hence $\Phi(G)=1$,
$G=N\rtimes M$, where $N=C_G(N)=F(G)=\mathrm{O}_p(G)=P$~is a unique minimal normal subgroup of~$G$ by Lemma~\ref{l4}.
Therefore $M$~is a  Hall $p^{\prime}$-subgroup of~$G$ and
$M=(G_1)_{p^\prime}(G_2)_{p^\prime}$ for some Hall ${p^\prime}$-subgroups~$(G_1)_{p^\prime}$ and~$(G_2)_{p^\prime}$ of~$G_1$ and of~$G_2$, respectively.

Suppose that $p$ divides $|G_1|$ and $|G_2|$. By Lemma~\ref{l7},
$P\leq G_1\cap G_2$. Let $P_1\le P$ and $|P_1|=p$.
Since $P\leq G_1$ and~$Q$ permutes with~$P_1$ for every Sylow subgroup~$Q$ of~$(G_2)_{p^\prime}$, we have $P_1(G_2)_{p^{\prime}}\leq G$ and
$(G_2)_{p^{\prime}}\leq N_G(P_1)$. Similarly, since $P\leq G_2$
and~$R$ permutes with~$P_1$ for every Sylow subgroup~$R$ of~$(G_1)_{p^\prime}$, it follows that $P_1(G_1)_{p^{\prime}}\leq G$ and
$(G_1)_{p^{\prime}}\leq N_G(P_1)$.
Hence~$M=(G_1)_{p^\prime}(G_2)_{p^\prime}\leq N_G(P_1)$ and~$P_1$ is normal in~$G$. By Lemma~\ref{l_skiba}, $G\in \frak F$, a contradiction.

Thus $P\leq G_1$ and $G_2$~is a $p^{\prime}$-subgroup of $G$. By Lemma~\ref{l11'}\,(1), $G_2$~is a cyclic group of order dividing~$p-1$.
Hence $G_2\in g(p)$, where $g$~is a canonical local definition of a saturated formation~$\frak U$. Since $\frak U \subseteq \frak F$, we have by~\cite[Proposition~IV.3.11]{Doerk}, $g(p)\subseteq f(p)$, where   $f$~is a canonical local definition of a saturated formation~$\frak F$.
Hence $G_2\in f(p)$. Since $P\leq G_1$, it follows that
$G_1=P\rtimes (G_1)_{p^\prime}$. Because $G_1\in \frak F$ and $F_p(G_1)=P$,
we have $G_1/F_p(G_1)=G_1/P\cong (G_1)_{p^\prime}\in f(p)$.
By Lemma~\ref{l11'}\,(2), $[(G_1)_{p^\prime}, (G_2)_{p^\prime}]=1$.
Since $(G_1)_{p^\prime}\in f(p)$, $(G_2)_{p^\prime}\in f(p)$ and
$f(p)$~is a formation, it follows that by Lemma~\ref{ll_4_1_21},
$G/P\cong M=(G_1)_{p^\prime}(G_2)_{p^\prime}\in f(p)$.
Because $P\in \frak N_p$, we have $G\in \frak F$, a contradiction.
The theorem is proved.
\end{proof}

\begin{corollary}
Let $G = G_1G_2$ be the product of {\rm msp}-permutable subgroups $G_1$ and $G_2$.

$1.$ If  $G_1, G_2\in \frak U$, then $G\in \frak U$.

$2.$ If  $G_1, G_2\in  \mathrm{w}\frak U$, then $G\in \mathrm{w}\frak U$.

$3.$ If  $G_1, G_2\in  \mathrm{v}\frak U$, then $G\in  \mathrm{v}\frak U$.
\end{corollary}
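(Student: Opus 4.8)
The corollary is an immediate application of Theorem~\ref{th2} to three particular choices of the formation~$\frak F$, so the entire task reduces to checking, for each of $\frak U$, $\mathrm w\frak U$, $\mathrm v\frak U$, that it is a subgroup-closed saturated formation lying between $\frak U$ and $\mathcal D$. All three facts are already recorded in the Preliminaries, so no new work is needed.

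First, for part~1 I would take $\frak F=\frak U$. The class of supersoluble groups is a subgroup-closed saturated formation, and trivially $\frak U\subseteq\frak U\subseteq\mathcal D$ since every supersoluble group has an ordered Sylow tower of supersoluble type. Hence Theorem~\ref{th2} gives $G\in\frak U$. For part~2 I would take $\frak F=\mathrm w\frak U$: by \cite[Theorem~2.7, Proposition~2.8]{VVTSMJ10} this is a subgroup-closed saturated formation whose members have an ordered Sylow tower of supersoluble type, so $\frak U\subseteq\mathrm w\frak U\subseteq\mathcal D$, and Theorem~\ref{th2} applies. For part~3 I would take $\frak F=\mathrm v\frak U$: by \cite[Theorem~B]{Mon_Kn_Res_2013} it is a subgroup-closed saturated formation, and the chain $\frak U\subseteq\mathrm w\frak U\subseteq\mathrm v\frak U\subseteq\mathcal D$ noted in the Preliminaries shows it satisfies the hypothesis of Theorem~\ref{th2}.

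There is no genuine obstacle here; the only thing to be careful about is to cite the correct structural results (subgroup-closedness, saturation, and the Sylow-tower inclusion into $\mathcal D$) for $\mathrm w\frak U$ and $\mathrm v\frak U$ rather than re-deriving them, since those are exactly the properties Theorem~\ref{th2} requires of $\frak F$. Once those are in place, each part is a one-line invocation of the theorem.

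\begin{proof}
Each of the classes $\frak U$, $\mathrm w\frak U$ and $\mathrm v\frak U$ is a subgroup-closed saturated formation: this is classical for $\frak U$, it is \cite[Theorem~2.7, Proposition~2.8]{VVTSMJ10} for $\mathrm w\frak U$, and \cite[Theorem~B]{Mon_Kn_Res_2013} for $\mathrm v\frak U$. Moreover $\frak U\subseteq \mathrm w\frak U\subseteq \mathrm v\frak U\subseteq \mathcal D$, as noted in Section~\ref{pre}. Thus in each of the three cases the corresponding class plays the role of $\frak F$ in Theorem~\ref{th2}, and since $G_1,G_2$ belong to it and $G=G_1G_2$ is the product of the {\rm msp}-permutable subgroups $G_1$ and $G_2$, Theorem~\ref{th2} yields $G\in\frak F$ accordingly.
\end{proof}
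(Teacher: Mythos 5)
Your proposal is correct and is exactly the intended argument: the paper states the corollary without proof precisely because it follows by applying Theorem~\ref{th2} with $\frak F$ equal to $\frak U$, $\mathrm{w}\frak U$ and $\mathrm{v}\frak U$, whose required properties (subgroup-closed saturated formation with $\frak U\subseteq\frak F\subseteq\mathcal D$) are recorded in Section~\ref{pre} with the same citations you give.
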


\end{document}